\newtheorem{defi}{Definition}
\newtheorem{prop}{Proposition}
\newtheorem{theo}{Theorem}
\newtheorem{lem}{Lemma}
\newcommand{\R}{\mathbb R}
\newcommand{\IR}{\mathbb R}
\newcommand{\IN}{\mathbb N}
\newcommand{\E}{\mathbb E}
\newcommand{\Pb}{\mathbb P}
\newcommand{\Laplace}{\boldsymbol{\Delta}}
\newcommand{\dif}{d \!}
\newcommand{\ch}{\mathcal H}
\newcommand{\cj}{\mathcal J}
\newcommand{\cn}{\mathcal N}
\begin{document}

\markboth{ Baltazar-Larios F., Delgado-Vences F., Peralta L.}{Statistical inference for a SPDE related to an ecological niche}

%
\catchline{}{}{}{}{}
%

\title{Statistical inference for a stochastic partial differential equation related to an ecological niche}

\author{Fernando Baltazar-Larios}

\address{Facultad de Ciencias, UNAM, 
    Coyoac\'an, CDMX, M\'exico\\
fernandobaltazar@ciencias.unam.mx}

\author{Francisco Delgado-Vences\footnote{Corresponding author}}

\address{Conacyt-Instituto de Matematicas, UNAM, Oaxaca de Ju\'arez, M\'exico,\\
delgado@im.unam.mx}

\author{Liliana Peralta}

\address{Facultad de Ciencias, UNAM, 
    Coyoac\'an, CDMX, M\'exico\\
lylyaanaa@ciencias.unam.mx }

\maketitle

\begin{history}
\received{(Day Month Year)}
\revised{(Day Month Year)}
\comby{(xxxxxxxxxx)}
\end{history}

\begin{abstract}
In this paper,  we use a stochastic partial differential equation (SPDE) as a model for the density of a population under the influence of random external forces/stimuli given by the environment. We study statistical properties for two crucial parameters of the SPDE that describe the dynamic of the system. To do that we use the Galerkin projection to transform the problem, passing from the SPDE to a system of independent SDEs; in this manner, we are able to find the Maximum likelihood estimator of the parameters. We validate the method by using simulations of the SDEs. We prove consistency and asymptotic normality of the estimators; the latter is showed using the Malliavin-Stein method. We illustrate our results with numerical experiments.

\end{abstract}

\keywords{SPDEs; inference; simulations; ecological niche}

\ccode{AMS Subject Classification: XXXXX}

\section{Introduction}

Since the foundation of the theory of deterministic biodiffusion has been noticed the necessity of more realistic models in ecology, for instance, a model that considers the interaction between animals, and stimuli from/to the environment (Ref.~\refcite{okubo2001diffusion}). Several attempts at random models have been tried to solve that; for instance, Ref.~\refcite{gloaguen2018stochastic} study a stochastic differential equation (SDE) whose drift
is the gradient of a multimodal potential surface. They used an Euler method to perform parameter inference and two other pseudo-likelihood procedures.  In Ref.~\refcite{dogan2016derivation}, the authors consider a biased and correlated random walk (BCRW) and they derived stochastic partial differential equations (SPDEs)  for a
BCRW in one, two, and three dimensions under the assumption that the globally preferred direction of movement is independent of the location of the particles. This assumption seems to be a limitation of their model. Further reading on stochastic modeling of animal movement can be found in Ref.~\refcite{smouse2010stochastic} for instance.

Our idea is to consider a stochastic equation inspired by a model proposed by Dipierro and Valdinoci, in Ref.~\refcite{Dipi-Pro} and Ref.~\refcite{Dipi-valdi}, where they study a deterministic model that describes the diffusion of a biological population living in an ecological niche subject to local and nonlocal dispersals. Their model considers a partial differential equation (PDE) with a mixture of classical and fractional Laplacians. Therefore, the diffusion follows two types of dispersals: a classical one, related to the usual Laplacian, and a nonlocal one, modeled on Lévy flights and encoded by the fractional Laplacian.

The proposed SPDE is a macroscopic model, in which we assume that the stimuli are only from the environment to the animals and there is no interaction between the animals. This SPDE could be appropriate to model birds' flights or sea animals' diffusion, for instance.

We are interested in estimating two parameters of the SPDE, however, doing that for the SPDE directly is a difficult task (see Ref.~\refcite{cia-18} for instance). By using the Galerkin projection we see that the coefficients of the spectral decomposition of the SPDE's solution are a system of independent Ornstein-Uhlenbeck (OU) processes, that we use to perform numerical approximations to the SPDE, and, to study statistical properties of some parameters in the SPDE.

Since we are assuming that the animals are not marking or modifying the territory, then it makes sense that the system of OU processes is independent. We want to mention the works of Ref.~\refcite{chavanis2010stochastic}, Ref.~\refcite{schweitzer1994clustering}, and Ref.~\refcite{stevens2000derivation}, in which the authors model the stimuli among the animals and between the animals and their environment.

Parameter estimation is very important in stochastic modeling. For an overview of statistical inference for SPDE, we refer to Ref.~\refcite{cia-18}. In particular, the investigation of maximum likelihood estimation for a certain class of SPDE is presented in Refs.~\refcite{hue-93} and \refcite{hue-95}. Bayesian estimation is considered in Ref.~\refcite{pra-00}, and estimation of a linear multiplier for SPDE is studied in Ref.~\refcite{rao-01}. A study of how parameter estimation techniques developed for simple linear SPDE models apply to cell repolarization was presented in Ref.~\refcite{Alt-22}. In Ref.~\refcite{hil-21},  studied parameter estimation for a parabolic linear stochastic partial differential equation in one space dimension when observing the solution field on a discrete grid in a fixed bounded domain. \\

Our work is related to the manuscript Ref.~\refcite{Loto2003}; in there, the author presents an estimation for a two-dimensional parameter from the observations of a random field defined on a compact manifold by a stochastic parabolic equation, in particular, the authors found conditions for the consistency and asymptotic normality of the estimator. However, the author considers only the case when the dimension of the projection increases, meaning the limit $T\rightarrow \infty$ is not studied there. In our work, we show the convergence when, both, $N$ (number of OU processes) and $T$ (time horizon)  go to infinity.

Since the SPDE considered in our model is linear and diagonalizable, we use the classical statistical method called the spectral approach (presented in Refs.~\refcite{hue-93} and \refcite{hue-95}) to obtain the maximum likelihood estimators (MLEs). We also prove the consistency and asymptotic normality of these parameters.\\

This paper is organized as follows. Section \ref{sec: models} contains previous works for modeling the distribution of biological species in ecological niches that are related to our model. In Section \ref{sec: our model} we describe the stochastic model used in this work. The calculation of the maximum likelihood estimators for the parameters of interest of our model is presented in Section \ref{sec: statistical},  and Section \ref{sec: con} contains the consistency properties of these parameters. In Section \ref{MC}   we prove the asymptotic normality of the estimators. We present a simulation study in Section \ref{simu}, where we calibrate and illustrate numerically the statistical methodology. Some concluding remarks are given in Section \ref{conclu}.\\

\section{Bibliographical comments}\label{sec: models} 

Several models and methods have been applied to ecological niches, such as support vector machines, generalized linear models, naive Bayes, artificial
neural networks, classification trees, etc. (see for instance Ref.~\refcite{guo-yu} and the references therein). Here we briefly review the most used method.

 One of the most important models for the distribution of biological species is the so-called MaxEnt\footnote{MaxEnt is the abbreviation of maximum entropy} method. This method is based on thermodynamic ideas and is in constant development (see for example Ref.~\refcite{aoki} and Ref.~\refcite{jor} for a general review of the model). It was introduced by Edwin T. Jaynes in Ref.~\refcite{jay-57-1} and Ref.~\refcite{jay-57-2} in the fifty's in the context of statistical mechanics and information theory, as an efficient and mathematically simple method to estimate probability distributions. This procedure was quickly adapted to studies of biological species in the context of ecology (for an overview of the development of MaxEnt see for instance Ref.~\refcite{har}).

The objective of the MaxEnt method, in ecology, is to estimate the potential distribution of a species according to the ``suitability" of the geographic environment that constitutes its natural habitat, under the fundamental thermodynamic principle that without external influences, the biological system tends naturally to a state of maximum entropy. The method solves the problem of determining the geographic distribution of species using presence-only data since it is based only on environmental conditions, which makes it a method capable of overcoming the deficiencies related to data quality (usually absence data are not available).
\section{Stochastic model}
\label{sec: our model}

Our proposal is inspired by the recent model of Dipierro et al.  (see Ref.~\refcite{Dipi-Pro} and Ref.~\refcite{Dipi-valdi}), where it presents a problem of population dynamics through a deterministic equation driven by a diffusive operator of mixed order, that is to say, the operator is the sum of a classical Laplacian $-\Laplace$ and fractional one $(-\Laplace)^s$, $s\in (0,1)$. This combination of Laplacians describes individuals who spread either by a random walk or by a jump process. For example, this can occur when exploring the environment and hunting at the same time.

Unlike the model presented in Ref.~\refcite{Dipi-valdi}, where the authors consider the integral definition of the fractional Laplacian (cf. Ref.~\refcite{se-va-14}), we will use its spectral definition, i.e., a different operator that is defined as a power of the classical Laplacian $-\Laplace$ with Neumann boundary conditions on a bounded and smooth enough domain $\mathcal{O}\subset \R^d$. 
We know (see Ref.~\refcite{gr-ng} or Section XIII.15 Ref.~\refcite{rs}) that:
\begin{itemize}
\item[a)] The set $\{h_k\}_{k\in\IN}$ of eigenfunctions of $-\Laplace$ forms a complete orthonormal system in $L^2(\mathcal{O})$.
\item[b)]\label{bp} The corresponding eigenvalues $\{\lambda_k\}_{k\in\IN}$, can be arranged such that $0=\lambda_1< \lambda_2\leq \lambda_3\leq \cdots$ with $\lambda_k\to +\infty$ as $k\to +\infty$.
\end{itemize}
In addition, it is also known that for domains with sufficiently regular boundaries there exists a positive constant $\varpi$ so that 
\begin{equation*}\label{eq:asymEigenv}
  \lim_{k\to\infty} |\lambda_k|k^{-2/d} = \varpi.
\end{equation*}
Then, the fractional Laplacian that we consider is the linear operator on $L^2(\mathcal{O})$ defined as  
\begin{align}\label{frac_laplace}
 u &= \sum_{k=1}^\infty u_k h_k \longmapsto (-\Laplace)^{s} u  =\sum_{k=1}^\infty \lambda^{s}_k u_k  h_k,
\end{align}
with domain 
\begin{equation*}\label{domain}
\mathcal{D}((-\Laplace)^s)=\left\{ u\in L^2(\mathcal{O})\mid u = \sum_{k=1}^\infty u_k h_k \text{ and } \sum_{k=1}^\infty u_k^2 \lambda_k^{2s} <+\infty \right\}.
\end{equation*}

Nonlocal operators such as integral fractional Laplacian have different types of applications. Particularly in ecology, a typical example is given by L\'evy flights, where the optimal search theory assumes that predators should adopt search strategies based on long jumps (see for instance Ref.~\refcite{rey}). In addition, we remark that operators defined with the spectral decomposition, as in the case of the one established in \eqref{frac_laplace}, also have applications in biology, for instance, see Ref.~\refcite{monte} the authors prove that the long jumps random walks are generated by this spectral operator with Neumann boundary conditions. On the other hand, we have chosen $(-\Laplace)^{\frac{1}{2}}$ in such a manner that commutes with $-\Laplace$ and that the SPDE, which we shall define next, can be diagonalized; this is a convenient property that we exploited heavily.\\

Now we present the SPDE that we will analyze in the rest of the work. Consider a stochastic basis  $(\Omega,\mathcal{F},\{\mathcal{F}_t\}_{t\geq 0}, \Pb)$ that satisfies the usual assumptions, that is, $(\Omega,\mathcal{F}, \Pb)$ is a complete probability space and $\{\mathcal{F}_t\}_{t\geq 0}$ is a filtration satisfying the usual hypothesis. On this basis, we denote by  $\{w_j,\ j\geq 1\}$ a collection of independent standard Brownian motions. For $d=2$ we consider the following system
\begin{align}\label{stochastic_local_nonlocal}
\begin{cases}
\displaystyle du-\theta \Laplace u\, dt +\beta (-\Laplace)^{\frac{1}{2}}u\, dt = \sigma \sum_{k\in\IN} \lambda_k^{-\gamma}h_k(x)\, d w_k(t),\; &\text{in } (0,+\infty)\times\mathcal{O},\\
\frac{ \partial u}{\partial\eta } =0,\;  &\text{on }(0,+\infty)\times \partial \mathcal{O},\\
u(0,x)= U_0(x), \; &\text{in } \mathcal{O}, 
\end{cases}
\end{align}
where $\theta>0, \beta>0$ are parameters to be estimated. In addition, $\ \gamma \geq 0$, $\sigma>0$, and $U_0\in L^2(\mathcal{O})$.\\

\begin{remark}

{\it Observe that in the defintion of the fractional Laplacian \eqref{frac_laplace}, the parameter $s$ belongs to the interval $(0,1)$; however, in the SPDE \eqref{stochastic_local_nonlocal} we have taken $s=1/2$. It is important to remark that all the results of this paper remain valid for the case $s\in (0,1)$,  although the proofs has to be adjusted accordingly.}
    
\end{remark}
 We could interpret $u=u(t,x)$, in equation \eqref{stochastic_local_nonlocal}, as the density of a population, living in a biological niche represented by $\mathcal{O}$, under the influence of drag forces and random forces. We can see that equation \eqref{stochastic_local_nonlocal} is within the general setting of Section 4.4.1 Ref.~\refcite{LototskyRozovsky2017Book}, therefore it has a unique adapted strong solution $u$ in probability sense, such that
\begin{equation*}
u\in  L^{2}(\Omega; C((0,T); L^2(\mathcal{O}))).
\end{equation*}

Let us denote by $H^N$ the finite-dimensional subspace of $L^2(\mathcal{O})$ generated by $\{h_k\}_{k=1}^N$, and denote by $P_N$ the projection operator of $L^2(\mathcal{O})$ into $H^N$ and put $U^N = P_N\, u$. Moreover, let $u_k,k\in\IN,$ be the Fourier coefficient of the solution $u$ of \eqref{stochastic_local_nonlocal} with respect to $h_k,k\in\IN$, i.e. $u_k(t) = \big(u(t,\cdot),h_k(\cdot)\big)_{L^2(\mathcal{O})}, k\in\IN$. It is not difficult to see that the Fourier coefficients $u_k,k\in\IN$ follow the dynamics of an OU process in $\IR$ given by
\begin{equation}\label{eq:OU-Fourier}
\dif u_k = \big(-\theta \lambda_k - \beta \lambda_k^{1/2}\big)  u_k dt + \sigma \lambda_k^{-\gamma} d w_k(t), \quad  u_k(0) = (U_0,h_k), \ t\geq 0.
\end{equation}

Observe that the coefficients of the SDE \eqref{eq:OU-Fourier} are scalar, simplifying the calculations of the expectation and the second moment. In addition, we have a strong solution for each $u_k$ given by (see Ref.~\refcite{iacus2008simulation}, Ref.~\refcite{oksendal2013stochastic}
for instance)
\begin{equation}\label{uk_solution}
 u_k(t)  =  u_k(0) e^{(-\theta\lambda_k -\beta\lambda_k^{1/2} )t} + \sigma \lambda_k^{-\gamma}\int_0^t e^{(-\theta\lambda_k -\beta\lambda_k^{1/2} )(t-s)} dw_k(s).
\end{equation}

It is not difficult to get the first two moments:
\begin{eqnarray}\label{2nd-moment-uk}
\E\big(u_k(t) \big) & = & u_k(0) e^{-(\theta\lambda_k + \beta\lambda_k^{1/2} )t},\nonumber\\ 
 \E\big(u_k^2(t) \big) &= &\frac{\sigma^2 \lambda_k^{-2\gamma}}{2(\theta\lambda_k + \beta \lambda_k^{1/2})} \Big[ 1- e^{-2(\theta\lambda_k +\beta\lambda_k^{1/2} )t} \Big].
\end{eqnarray}

It is well-known that we can write the solution $u$ of the SPDE (with Neuman conditions) \eqref{stochastic_local_nonlocal} as
\begin{align} \label{u_spectral}
u(t,x)= \sum_{k=0}^\infty u_k(t)h_k(x) = 1 + \sum_{k=1}^\infty u_k(t)h_k(x),   
\end{align}
and we can approximate $u$ (in $L^2(\mathcal{O})$) with 
\begin{align}\label{u_truncation}
U^N(t,x):= \sum_{k=0}^N u_k(t)h_k(x) = 1 + \sum_{k=1}^N u_k(t)h_k(x).   
\end{align}
Note that $U^N$ is a finite sum of OU processes multiplied by the elements of the basis $h_k(\cdot) $; that means we could approximate the solution of the SPDE  \eqref{stochastic_local_nonlocal}.
 
\section{Statistical Inference }\label{sec: statistical}

Here, we will adapt the methods presented in Ref.~\refcite{CDK}. We study the MLEs for the parameters of interest, $\theta$ and $\beta$, of the SPDE \eqref{stochastic_local_nonlocal}. First, we introduce some notation that we shall use for the rest of the work.\\

\textbf{\textsc{Notation}}: For $p=\frac{1}{2}, 1, \frac{3}{2}, 2$ and $q=1,2$ we set
\begin{align*}
    I_{p,q} & := \sum_{k=1}^N   \lambda_k^{p+q\gamma} \int_0^T u_k^2(t) dt,\\
    J_{p,q} & := \sum_{k=1}^N   \lambda_k^{p+q\gamma} \int_0^T u_k(t) du_k(t),\\
    \cj_{p,q} & := \sum_{k=1}^N   \lambda_k^{p+q\gamma} \int_0^T u_k(t) dw_k(t).
\end{align*}

For two sequences of real numbers $\{a_n\}$ and $\{b_n\}$, we write $a_n \sim b_n$, if there exists  a constant $0<c$  such that $\lim_{n\to\infty}a_n/b_n=c$, and $a_n\simeq b_n$, if $\lim_{n\to\infty}a_n/b_n=1$.\\

\subsection{MLEs for $\theta$ and $\beta$}
Let $\Pb^{N,T}_{\theta, \beta}$ be the probability measures on $C([0,T]; H^N)\backsimeq C([0,T]; \R^N)$ generated by the $U^N$, which are equivalent for different values of $\theta$ and $\beta$. In addition, the Radon-Nikodym derivative or likelihood ratio has the form
\begin{align*}
\frac{\Pb^{N,T}_{\theta, \beta}}{\Pb^{N,T}_{\theta_{0}, \beta_0}} (U^N) &=
\exp\Bigg(\sum_{k=1}^N \frac{\left(\big[-\theta \lambda_k- \beta \lambda_k^{1/2}\big]- \big[-\theta_0 \lambda_k- \beta_0 \lambda_k^{1/2}\big] \right)}{\sigma^{2} \lambda_k^{-2\gamma}} \int_0^T  u_k(t)d u_k(t)\\
&\qquad - \sum_{k=1}^N \frac{\left(\big[-\theta \lambda_k - \beta \lambda_k^{1/2}\big]^2- \big[-\theta_0 \lambda_k - \beta_0 \lambda_k^{1/2}\big]^2 \right)}{2\sigma^{2} \lambda_k^{-2\gamma}}\int_0^Tu_k^2(t)d t\Bigg). 
\end{align*}

Maximizing the log-likelihood ratio with respect  to $\theta$ and $\beta$,  we obtain the MLEs
\begin{align}
\widehat{\theta}_{N,T} & := \frac{1}{I_{2,2}} \Bigg[ -\widehat{\beta}_{N,T}\, \sum_{k=1}^{N}\lambda_k^{3/2 +2\gamma}\int_0^T u_k^2(t)d t -\sum_{k=1}^{N}\lambda_k^{1 +2\gamma}\int_0^T u_k(t)d u_k(t)  \Bigg]\nonumber\\
&= \frac{1}{I_{2,2}} \Big[ -\widehat{\beta}_{N,T}\,  I_{3/2,2} - J_{1,2} \Big], \label{eq:MLE-theta}\\
\widehat{\beta}_{N,T} & := \frac{1}{I_{1,2}} \Bigg[ -\widehat{\theta}_{N,T}\, \sum_{k=1}^{N}\lambda_k^{3/2 +2\gamma}\int_0^T u_k^2(t)d t -\sum_{k=1}^{N}\lambda_k^{1/2 +2\gamma}\int_0^T u_k(t)d u_k(t)  \Bigg]\nonumber\\
&= \frac{1}{I_{1,2}} \Big[ -\widehat{\theta}_{N,T}\,  I_{3/2,2} - J_{1/2,2} \Big], \label{eq:MLE-beta}
\end{align}
with $ N\in\IN, \ T>0$.

By substituting the OU process, meaning the SDE \eqref{uk_solution}, we rewrite the last two expressions as a system of equations

\begin{equation*}
    \label{MLE-stochastic_version1}
    \begin{pmatrix}
        I_{2,2}\; &\; I_{3/2,2} \\
        I_{3/2,2} \; & \; I_{1,2}
    \end{pmatrix}
    \begin{pmatrix}
       \widehat{\theta}_{N,T}  -\theta_0  
        \\
        \widehat{\beta}_{N,T} - \beta_0
    \end{pmatrix}
    =
        - \sigma \begin{pmatrix} 
        \, \cj_{1,1}    \\
        \cj_{1/2,1}
    \end{pmatrix},
\end{equation*}

therefore

 \begin{align}
    \label{MLE-stochastic_version}
    \begin{pmatrix}
       \widehat{\theta}_{N,T}  -\theta_0  
        \\
        \widehat{\beta}_{N,T} - \beta_0
    \end{pmatrix}
& = \frac{-\sigma}{I_{2,2}I_{1,2} - I_{3/2,2}^2 }     \begin{pmatrix}
        I_{1,2}\; &\; -I_{3/2,2} \\
        -I_{3/2,2}\; & \;I_{2,2}
    \end{pmatrix}
    \begin{pmatrix} 
         \cj_{1,1}    \\
        \cj_{1/2,1}
    \end{pmatrix},
\end{align}
where the quantity $I_{4,2}I_{2,2} - I_{3,2}^2$ is strictly positive (see inequality \eqref{positive_quotient} below), and thus the expression  \eqref{MLE-stochastic_version} is well-defined.


\subsection{Fisher's information}
Now we will compute the Fisher's information (FI) related to $\Pb^{N,T}_{\theta,  \beta}/\Pb^{N,T}_{\theta_{0}, \beta_0}$. For simplicity, set $U_0=0$. We recall that for both parameters we have that the FI is given by a symmetric $(2\times 2)$-matrix. Furthermore, if we fixed one of the parameters, we obtain that the FI is a scalar. Assuming that $\beta$ is fixed, we get
\begin{align*}
  \mathcal{I}_{N,T;\theta} & := \int \left|  \frac{\partial}{\partial \theta} \log \frac{\Pb^{N,T}_{\theta, \beta}}{\Pb^{N,T}_{\theta_0, \beta_0}} \right|^2 d \Pb^{N,T}_{\theta_0, \beta_0} \nonumber\\
 & = - \int  \frac{\partial^2}{\partial \theta^2} \left(\log \frac{\Pb^{N,T}_{\theta, \beta}}{ \Pb^{N,T}_{\theta_0, \beta_0}}\right)
 d \Pb^{N,T}_{\theta_0, \beta_0}  \nonumber\\
  & =  \frac{1}{\sigma^2} \sum_{k=1}^{N} \lambda_k^{2 +2\gamma }\E\left[ \int_0^T u_k^2(t) d t \right].
\end{align*}

 The moments computed in \eqref{2nd-moment-uk} imply
$$
\E\left[ \int_0^T u_k^2 d t \right] = \frac{\sigma^2 \lambda_k^{-2\gamma}}{2(\theta_0\lambda_k + \beta_0 \lambda_k^{1/2})} \left(T+\frac{1-e^{(-2\theta_0\lambda_k -2\beta_0\lambda_k^{1/2} )T}}{(-2\theta_0\lambda_k -2\beta_0\lambda_k^{1/2} )}\right),
$$
therefore, without loss of generality, we can assume that $\lambda_k\geq 1$ and to get 
\begin{align}
 \mathcal{I}_{N,T;\theta_0} & = \sum_{k=1}^{N}\frac{ \lambda_k^2}{2(\theta_0\lambda_k + \beta_0 \lambda_k^{1/2})} \left(T-\frac{1-e^{-2(\theta_0\lambda_k+\beta_0\lambda_k^{1/2})T}}{2(\theta_0\lambda_k +\beta_0\lambda_k^{1/2})}\right)\nonumber\\
& \sim T\sum_{k=1}^N \frac{\lambda_k^{2}}{(\theta_0\lambda_k + \beta_0 \lambda_k^{1/2})} ,\quad \mbox{as}\ T\to \infty
\nonumber\\
& > T\sum_{k=1}^N \frac{\lambda_k^{2}}{(\theta_0+\beta_0)\lambda_k } = \frac{T}{(\theta_0+\beta_0)}\sum_{k=1}^N \lambda_k,\quad \mbox{as}\ T\to \infty
\nonumber\\
&  \sim \frac{\varpi dTN^{\frac{2}{d}+1}}{\left(d+2\right)\, (\theta_0+\beta_0)},\quad \mbox{as}\ N,T\to \infty. 
\label{eq:FisherInfo2}
\end{align}
                                                            In particular, note that $ \mathcal{I}_{N,T;\theta_0}\to\infty$, when $N,T\to\infty$. In very similar manner, we can see that $ \mathcal{I}_{N,T;\beta_0}\to\infty$, when $N,T\to\infty$.

We now calculate the FI for both parameters. Set
\begin{equation*}
\varphi:=\varphi_{\theta,\beta}^{N,T} := \log \frac{\Pb^{N,T}_{\theta, \beta}}{\Pb^{N,T}_{\theta_{0}, \beta_0}} (U^N).
\end{equation*}
Thus
\begin{align*}
\mathcal{I}_{N,T;\theta,\beta}:=    \begin{pmatrix}
         -\int  \frac{\partial^2 \varphi}{\partial \theta^2} \, 
 d \Pb^{N,T}_{\theta_0, \beta_0}  & -\int  \frac{\partial^2 \varphi}{\partial \beta \partial\theta} \, 
 d \Pb^{N,T}_{\theta_0, \beta_0}  \\
 &
 \\
    -\int  \frac{\partial^2 \varphi}{\partial \theta \partial\beta} \, 
 d \Pb^{N,T}_{\theta_0, \beta_0}   & -\int  \frac{\partial^2 \varphi}{\partial \beta^2} \, 
 d \Pb^{N,T}_{\theta_0, \beta_0} 
    \end{pmatrix}=:  \begin{pmatrix} 
    A_{1,1}& A_{1,2}\\
    & \\
    A_{2,1}& A_{2,2}
    \end{pmatrix}.  
\end{align*}

We have already calculated the entry $A_{1,1}$. We calculate now $A_{1,2}=A_{2,1}$
$$
A_{1,2}= \frac{1}{\sigma^2} \sum_{k=1}^{N} \lambda_k^{3/2 +2\gamma} \E\left[ \int_0^T u_k^2(t) d t \right].
$$

and $A_{2,2}$
$$
A_{2,2}= \frac{1}{\sigma^2} \sum_{k=1}^{N} \lambda_k^{1 +2\gamma} \E\left[ \int_0^T u_k^2(t) d t \right].
$$
In analogy with the calculations to obtain the inequality \eqref{eq:FisherInfo2} we could see that 

\begin{align*}
det\big(\mathcal{I}_{N,T;\theta,\beta}\big) \sim C \, T\, N^{\frac{2}{d}+1}
\end{align*}
where $C:=C(\theta_0.\beta_0, \varpi, d)$ is a constant.

\section{Consistency of the estimators}\label{sec: con}

Now we will study the consistent of the parameters $\theta$ and $\beta$ in equation \eqref{stochastic_local_nonlocal}. We have the following result.

\begin{theorem}\label{theo:cons}
 The estimators $(\widehat{\theta}_{N,T}, \widehat{\beta}_{N,T} )$, given by  \eqref{eq:MLE-theta} and \eqref{eq:MLE-beta}, are strongly consistent, that is 
  \begin{equation*}
   \lim_{(N,T) \rightarrow \infty} \begin{pmatrix}
       \widehat{\theta}_{N,T}    
        \\
        \widehat{\beta}_{N,T} 
    \end{pmatrix} = \begin{pmatrix}
       \theta_0  
        \\
       \beta_0
    \end{pmatrix}.
  \end{equation*}
  
\end{theorem}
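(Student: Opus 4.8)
The plan is to start from the closed-form identity \eqref{MLE-stochastic_version}: it then suffices to show that each of the two coordinates on its right-hand side tends to $0$ almost surely as $(N,T)\to\infty$. Writing $c_k:=\int_0^T u_k^2(t)\,dt$ and $d_k:=\int_0^T u_k(t)\,dt$, we have $I_1=\sum_{k=1}^N\lambda_k^{4+2\gamma}c_k$, $I_2=\sum_{k=1}^N\lambda_k^{2+2\gamma}c_k$, $J_1=-I_1^{-1}\sum_{k=1}^N\lambda_k^{3+2\gamma}d_k$, $J_2=-I_2^{-1}\sum_{k=1}^N\lambda_k^{3+2\gamma}d_k$, and the randomness enters only through the two continuous martingales (in $T$) $M^{(1)}_T:=\sum_{k=1}^N\lambda_k^{2+\gamma}\int_0^T u_k\,dw_k$ and $M^{(2)}_T:=\sum_{k=1}^N\lambda_k^{1+\gamma}\int_0^T u_k\,dw_k$. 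Reading off \eqref{MLE-stochastic_version}, the theorem reduces to four claims: (a) $I_1,I_2\to\infty$ a.s.; (b) $M^{(1)}_T/I_1\to0$ and $M^{(2)}_T/I_2\to0$ a.s.; (c) $J_1,J_2$ stay bounded; (d) $1-J_1J_2$ stays bounded away from $0$.

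The engine behind (a), (c) and (d) is the ergodicity of the Fourier modes. By \eqref{eq:OU-Fourier} each $u_k$ is a scalar Ornstein--Uhlenbeck process with drift coefficient $-(\theta_0\lambda_k^2-\beta_0\lambda_k)$, which is strictly negative for all large $k$ (the finitely many exceptional modes are irrelevant in the $\lambda_k$-weighted sums, and the transient due to $U_0$ decays anyway), so $u_k$ is ergodic with invariant law $\mathcal N(0,v_k)$, $v_k=\sigma^2\lambda_k^{-2\gamma}/[2(\theta_0\lambda_k^2-\beta_0\lambda_k)]$. The ergodic theorem then gives $T^{-1}c_k\to v_k$ and $T^{-1}d_k\to0$ a.s., with the sharper bound $\E[d_k^2]\asymp\sigma^2\lambda_k^{-2\gamma}T/(\theta_0\lambda_k^2-\beta_0\lambda_k)^2$; the first limit already forces $I_1,I_2\to\infty$. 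For (c)--(d) one uses the Cauchy--Schwarz split $\lambda_k^{3+2\gamma}d_k=(\lambda_k^{2+\gamma}c_k^{1/2})(\lambda_k^{1+\gamma}c_k^{-1/2}d_k)$, which gives $J_1J_2=(\sum_k\lambda_k^{3+2\gamma}d_k)^2/(I_1I_2)\le I_2^{-1}\sum_{k=1}^N\lambda_k^{2+2\gamma}d_k^2/c_k$; combining $d_k^2/c_k\asymp(\theta_0\lambda_k^2-\beta_0\lambda_k)^{-1}$ with $c_k\asymp v_kT$ and the eigenvalue asymptotics shows $J_1J_2$ is of order $N^{2\gamma/d}/T$ and $J_1,J_2$ are of order $N^{(1+\gamma)/d}/\sqrt{T}$.

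For (b) the key observation is that the quadratic variations are exactly the denominators: by independence of the $w_k$, $\langle M^{(1)}\rangle_T=\sum_{k=1}^N\lambda_k^{4+2\gamma}c_k=I_1$ and $\langle M^{(2)}\rangle_T=\sum_{k=1}^N\lambda_k^{2+2\gamma}c_k=I_2$. Since $\langle M^{(i)}\rangle_\infty=\infty$ a.s. by (a), the strong law of large numbers for continuous local martingales (Dambis--Dubins--Schwarz time change plus the Brownian SLLN) gives $M^{(i)}_T/\langle M^{(i)}\rangle_T\to0$, that is $M^{(1)}_T/I_1\to0$ and $M^{(2)}_T/I_2\to0$. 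Substituting (a)--(d) into \eqref{MLE-stochastic_version} then yields $\widehat\theta_{N,T}\to\theta_0$ and $\widehat\beta_{N,T}\to\beta_0$.

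The hard part is that all four claims are essentially classical in the \emph{iterated} limit (fix $N$, let $T\to\infty$, then let $N\to\infty$), whereas Theorem~\ref{theo:cons} asserts the genuine joint limit $(N,T)\to\infty$, and $M^{(i)}_T$, $I_i$, $J_1$, $J_2$ all depend on $N$. So the real work is to upgrade the $T\to\infty$ statements to estimates uniform in $N$ — e.g. a maximal inequality for $M^{(i)}_T/I_i$ together with the monotonicity of $I_i$ in $N$, and likewise uniform ergodic-rate control of $c_k,d_k$ — or else to accept a mild coupling of the growth rates (the crude bounds above already give consistency whenever $T/N^{2\gamma/d}\to\infty$, which is no restriction when $\gamma=0$). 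Obtaining clean uniform control of $J_1,J_2$ in the joint regime, rather than of the denominators $I_i$ or the martingale terms, is where I expect the main difficulty to lie.
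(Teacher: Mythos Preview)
Your route is genuinely different from the paper's. You treat the asymptotics primarily in $T$: ergodicity of each OU mode to control $c_k,d_k$, and the martingale SLLN (via DDS) to kill $M^{(i)}_T/I_i$. The paper instead treats the asymptotics primarily in the Fourier index $k$: it sets $\xi_{k,T}:=\lambda_k^{2+\gamma}\int_0^T u_k\,dw_k$, checks $\E\xi_{k,T}^2=\lambda_k^{4+2\gamma}\int_0^T\E u_k^2\,dt$ has a finite limit in $k$ (using \eqref{2nd-moment-uk} and $\lambda_k\sim\varpi^{1/2}k^{1/d}$), and applies the discrete SLLN of Theorem~\ref{SLLN} to get $\sum_k\xi_{k,T}/\sum_k\E\xi_{k,T}^2\to0$; the second part of Theorem~\ref{SLLN} then gives $I_1/\sum_k\E\xi_{k,T}^2\to1$. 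The same spectral SLLN, applied to $\chi_{k,T}:=\lambda_k^{3+2\gamma}\int_0^T u_k\,dt$, is what the paper uses to show $J_1\to0$ directly, rather than merely bounding $J_1J_2$.

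The pay-off of the paper's choice is exactly the point you flag as hard: because the LLN is in $k$, the convergence is driven by $N\to\infty$ and no coupling of $N$ and $T$ is needed---you never have to make the restriction $T/N^{2\gamma/d}\to\infty$ (or the stronger one your estimate of $J_1,J_2$ would require). Conversely, your martingale observation $\langle M^{(i)}\rangle_T=I_i$ is cleaner than the paper's treatment of that ratio, but it only gives the $T\to\infty$ limit for \emph{fixed} $N$; since $M^{(i)}$ itself depends on $N$, DDS does not automatically upgrade to the joint limit, so even step~(b) still needs the uniform control you mention. In short, your diagnosis of the difficulty is correct, and the paper's answer to it is to swap the time-ergodic LLN for a spectral LLN over the modes (Theorem~\ref{SLLN} together with the eigenvalue asymptotics), which simultaneously handles the martingale ratio and forces $J_1\to0$ without any growth condition linking $N$ and $T$.
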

\begin{proof}

First,  we decompose  \eqref{MLE-stochastic_version} in two terms as follows: 

\begin{align}
     \widehat{\theta}_{N,T}  -\theta_0  
    &= \sigma \Bigg[ \frac{-I_{1,2} \cj_{1,1} } {I_{2,2}\,I_{1,2}- I_{3/2,2}^2 } + \frac{I_{3/2,2} \cj_{1/2,1} }{I_{2,2}\,I_{1,2} - I_{3/2,2}^2 } \Bigg]\nonumber\\
&=: \sigma\big( -T_1 + T_2\big).\label{decomp1}
    \end{align}

We continue the proof by studying the terms $T_1$ and $T_2$ separately.
We rewrite the quantity $T_1$ as follows

\begin{align}\label{T1_def}
 T_1 &=  \frac{ \, \sum_{k=1}^N Var(\xi_{k,T}) } {I_{2,2} - \frac{I_{3/2,2}^2}{I_{1,2}} } \frac{\sum_{k=1}^N \xi_{k,T}}{\sum_{k=1}^N Var(\xi_{k,T})} ,
\end{align}
where
\begin{equation*}
\xi_{k,T} :=\lambda_k^{1+\gamma}
\int_0^T u_k(t) dw_k(t).
\end{equation*}

We will prove that the second factor on the right side of \eqref{T1_def} converges to zero in probability, while the first one converges to some finite number $\rho^*$, also in probability. Applying the same arguments as in the last part of the proof of Theorem 1 in Ref.~\refcite{CDK} we have that

%
%

\begin{align}\label{T11_convergence}
  \frac{\sum_{k=1}^N \xi_{k,T}}{\sum_{k=1}^N Var(\xi_{k,T})} \longrightarrow 0 \mbox{ in probability when }\, N,T \rightarrow\infty.  
\end{align}

Focus on the inverse of the second factor in \eqref{T1_def}. Observe that we can have

\begin{align*}
0<\frac{I_{2,2} - \tfrac{I_{3/2,2}^2}{I_{1,2}}}{  \sum_{k=1}^N Var(\xi_{k,T}) } < \frac{I_{2,2}}{  \sum_{k=1}^N Var(\xi_{k,T}) }. 
\end{align*}
Furthermore, it is not difficult to see that 

$$
\frac{I_{2,2}}{  \sum_{k=1}^N Var(\xi_{k,T})} 
\longrightarrow 1 \mbox{ in   $L^1$-norm when }\, N,T \rightarrow\infty,
$$
and using similar arguments as in Ref.~\refcite{CDK} we have that

%
\begin{align*}
\frac{I_{2,2}  }{  \sum_{k=1}^N Var(\xi_{k,T}) } \longrightarrow 1 \mbox{ in probability when }\, N,T \rightarrow\infty, 
\end{align*}
and from this, we deduce that 
\begin{align*}
0<\frac{I_{2,2} - \frac{I_{3/2,2}^2}{I_{1,2}}}{  \sum_{k=1}^N Var(\xi_{k,T}) } \longrightarrow \rho \mbox{ in probability when }\, N,T \rightarrow\infty,  
\end{align*}
for some $\rho\in (0,1)$, which implies that 
\begin{align*}
\frac{  \sum_{k=1}^N Var(\xi_{k,T}) }{I_{2,2} - \frac{I_{3/2,2}^2}{I_{1,2}}}\longrightarrow 1/\rho\, \mbox{ in probability when } N,T \rightarrow\infty.  
\end{align*}
This shows that $T_1 \longrightarrow 0$  in probability when $ N,T \rightarrow\infty$.

For $T_2$, define 
$$
\eta_{k,T}:= \lambda_k^{1/2+\gamma} \int_0^T u_k(t) dw_k(t) 
$$
then, $\E(\eta_{k,T})=0$ and 
$$
Var(\eta_{k,T})= \E(\eta_{k,T}^2)=  \lambda_k^{1+2\gamma} \int_0^T \E(u_k^2(t)) dt.
$$
Then, we rewrite $T_2$ as
\begin{align*}
 T_2&= \frac{I_{3/2,2}\,\sum_{k=1}^N Var(\eta_{k,T}) }{I_{2,2}\,I_{1,2} - I_{3/2,2}^2 } \frac{\sum_{k=1}^N \eta_{k,T}}{\sum_{k=1}^N Var(\eta_{k,T})}.
\end{align*}

For the second factor, using similar arguments as we have used to obtain \eqref{T11_convergence} we conclude that  
\begin{align*}
  \frac{\sum_{k=1}^N \eta_{k,T}}{\sum_{k=1}^N Var(\eta_{k,T})} \longrightarrow 0 \mbox{ in probability when }\, N,T \rightarrow\infty.  
\end{align*}

To prove that the first factor in $T_2$ converges to a finite number we define 
$$
f:=f_N(t):=\sum_{k=1}^N \lambda_k^{1/2+\gamma} u_k(t),\quad 
g:=g_N(t):=\sum_{k=1}^N \lambda_k^{1+\gamma} u_k(t).
$$
Then, it is clear that $f,g$ belong to the  Hilbert Space  $\mathcal{H}:=L^2([0,T]\times \mathcal{M})$ where $\mathcal{M}$ is the counting measure on $\mathbb{N}$\,\footnote{See Section \ref{MC} below, for the definition of the space $\mathcal{H}$.}. In addition, it is not difficult to see that $f,g$ are linearly independent on $\mathcal{H}$. Now we write the inner product on $\mathcal{H}$ of $f,g$:
\begin{align*}
    \langle  f,g \rangle_{\mathcal{H}} &= \sum_{k=1}^N \int_0^T \Big[ \lambda_k^{1/2+\gamma} u_k(t) \Big] \Big[ \lambda_k^{1+\gamma} u_k(t) \Big] dt
= \sum_{k=1}^N \int_0^T  \lambda_k^{3/2+2\gamma} u_k^2(t)dt
= I_{3/2,2},
\end{align*}
thus, by applying the strict Cauchy-Schwarz's inequality (since $f,g$ are linearly independent in $\ch$) we have
\begin{align}\label{positive_quotient}
0<I_{3/2,2}^2 &=  \langle  f,g \rangle_{\mathcal{H}}^2 \, <\, \| f \|_\mathcal{H}^2 \|g \|_\mathcal{H}^2 \nonumber \\
& = \Bigg(\sum_{k=1}^N \int_0^T  \lambda_k^{1+2\gamma} u_k^2(t) \, dt\Bigg) \, \Bigg( \sum_{k=1}^N \int_0^T  \lambda_k^{2+2\gamma} u_k^2(t)\, dt \Bigg) \nonumber \\
&= I_{1,2} \, I_{2,2}.
\end{align}
Then, there exists a sequence of functions $\{\alpha_N(T)\}$ such that $0<\alpha_{N}(T)<1$ for all $N,T$ and that satisfies 
\begin{equation}\label{alpha_def}
    I_{3/2,2}^2 = \alpha_N(T) \, I_{2,2} \, I_{1,2}.
\end{equation}

Thus, we can write the first term in $T_2$ as follows
\begin{align*}
\frac{I_{3/2,2}\,\sum_{k=1}^N Var(\eta_{k,T}) }{I_{2,2}\,I_{1,2} - I_{3/2,2}^2 } &= \frac{I_{3/2,2} }{I_{2,2} \, I_{1,2} - \alpha_{N}(T) I_{2,2} \, I_{1,2}}\,\, \sum_{k=1}^N Var(\eta_{k,T})\nonumber\\
&= \frac{I_{3/2,2} }{(1-\alpha_N(T))\, I_{2,2} } \frac{\,\sum_{k=1}^N Var(\eta_{k,T})}{I_{1,2}}
\end{align*}

We rewrite the term 
$$
\frac{I_{3/2,2} }{ I_{2,2} } =  \frac{I_{3/2,2} }{ \E(I_{2,2}) }\, \frac{ \E(I_{2,2}) }{ I_{2,2} } =: Z_1 \, Z_2
$$
Then, using property b) of the sequence $\{\lambda_k\}$ we deduce that $Z_1$ converges to a finite constant in $L^2$-norm, and thus in probability.  

For $Z_2$,  we deduce that $1/Z_2$ converges to one in  $L^2$-norm, and thus in probability. Therefore,  $Z_2$ converges to one in probability. This implies that

$$
\frac{I_{3/2,2} }{ I_{2,2} }  \longrightarrow c_1<\infty \, \mbox{ in probability when }\, N,T \rightarrow\infty.
$$

Using the same arguments as in the last part of the proof of Ref.~\refcite{CDK}[Theorem 1] we can prove that
\begin{align*}
 \frac{\sum_{k=1}^N Var(\eta_{k,T})}{I_{1,2}} \longrightarrow 1 \, \mbox{ in probability when }\, N,T \rightarrow\infty.
\end{align*}

From these, and since $1/(1-\alpha_N(T))$ is bounded for all $N$ and $T$, we have that the first term in $T_2$ converges to 0 in probability.
 
Joining these last expressions and using Slutsky's theorem  (see Ref.~\refcite{casella-berger}, in particular, Th. 5.5.17 therein) we obtain
$T_2 \longrightarrow 0$  in probability when $ N,T \rightarrow\infty$. \\

This shows that $\theta_{N,T} \longrightarrow \theta_0$  in probability when $ N,T \rightarrow\infty$.\\

Similar arguments show that $\beta_{N,T} \longrightarrow \beta_0$  in probability when $ N,T \rightarrow\infty$. This concludes the proof.
\end{proof}

\section{Asymptotic normality through Malliavin-Stein Method}\label{MC}
This section is devoted to proving the asymptotic normality. First, we enunciate the main result of the section; afterward, for the sake of completeness, we review briefly the Malliavin-Stein method in the appendix.

\begin{theorem}\label{theo:CL}
The estimators $(\widehat{\theta}_{N,T}, \widehat{\beta}_{N,T} )$, given by  \eqref{eq:MLE-theta} and \eqref{eq:MLE-beta}, are asymptotically normal,  that is,
  \begin{align*}
   w-\lim_{(N,T) \rightarrow \infty} 
        \sqrt{T} N^{1/d\, + 1/2 } \big( \widehat{\theta}_{N,T}-  \theta_0 \big)  &=  \cn (0, C_1) \\
         w-\lim_{(N,T) \rightarrow \infty} 
        \sqrt{T} N^{1/d\, + 1/2 } \big( \widehat{\beta}_{N,T}-  \beta_0 \big)  &=  \cn (0, C_2) 
  \end{align*}
  
  where $C_1$ and $C_2$ are constants not depending from $T,N$.
\end{theorem}

Observe that from equation \eqref{decomp1} and using $\alpha_N(T)$ as we have defined in \eqref{alpha_def}, we have the following expression
\begin{align*}
    \sqrt{T} N^{1/d\, + 1/2 } \, \big(   \widehat{\theta}_{N,T}  -\theta_0 \big)  & = \frac{1}{1-\alpha_N(T)} \sqrt{T} N^{1/d\, + 1/2 } \, \frac{\sqrt{var(\cj_{1,1})}}{I_{2,2}} \frac{\cj_{1,1}}{\sqrt{var(\cj_{1,1})}}\nonumber\\
    & \, + \frac{\alpha_N(T)}{1-\alpha_N(T)} \sqrt{T} N^{1/d\, + 1/2 } \, \frac{\sqrt{var(\cj_{1/2,1})}}{I_{3/2,2}} \frac{\cj_{1/2,1}}{\sqrt{var(\cj_{1/2,1})}}.
\end{align*}

We will prove, using the Malliavin-Stein method, that for $p=1, 1/2$
$$
M_{N,T}:=\frac{\cj_{p,1}}{\sqrt{var(\cj_{p,1})}}
$$
converges asymptotically to a Gaussian distribution, when $N,\, T \,\rightarrow\, \infty$. Moreover, we have the following convergences.

\begin{lem}
The expressions 
$$
\sqrt{T} N^{1/d\, + 1/2 } \, \frac{\sqrt{var(\cj_{1,1})}}{I_{2,2}} \qquad
   \mbox{ and } \qquad
 \sqrt{T} N^{1/d\, + 1/2 } \,  \frac{\sqrt{var(\cj_{1/2,1})}}{I_{3/2,2}}  $$
converge in probability to some finite constants, $C_1$ and $C_2$, when both $N,T \rightarrow \infty$. Therefore, converges in distribution to the same constants. 
\end{lem}

From these two convergences and using Slutsky's theorem, we shall obtain the asymptotic normality.  

Now we prove the lemma.
\begin{proof}
Set $p=3/2,2$. By following the procedure to obtain the estimation \eqref{eq:FisherInfo2},  we arrive to 
\begin{align}\label{tech_1}
  \E\big( I_{p,2}\big) &\sim  \frac{\sigma\, T}{(\theta_0+\beta_0)}\sum_{k=1}^N \lambda_k^{p-1/2},\quad \mbox{as}\ T\to \infty\nonumber\\
 & \sim \frac{\sigma\, \varpi dTN^{\frac{p-2}{d}+1}}{\left(4+2d\right)\, (\theta_0+\beta_0)},\quad \mbox{as}\ N,T\to \infty. 
\end{align}

For $N,T$ define 
$$
Z_{N,T}:=  \frac{I_{2,2}}{ 
  \sqrt{T}\,  N^{1/d\, + 1/2 }\, 
 \sqrt{var( \cj_{1,1})}}
$$
Thus, we have
\begin{align*}
 \E\big( Z_{N,T} \big) =  \frac{\E(I_{2,2})}{ 
  \sqrt{T}\,  N^{1/d\, + 1/2 }\, 
 \sqrt{\E(I_{2,2})}}  = \frac{\sqrt{\E(I_{2,2})}}{ 
  \sqrt{T}\,  N^{1/d\, + 1/2 }},
\end{align*}
and, by using \eqref{tech_1}, we get 
\begin{align*}
 \E\big( Z_{N,T} \big) \sim \frac{\sigma\, \varpi d}{\left(4+2d\right)\, (\theta_0+\beta_0)},\quad \mbox{as}\ N,T\to \infty. 
\end{align*}
Thus, the $Z_{N,T}$ converges in $L^1(\Omega)$ to the constant appearing in the last expression, and therefore, converges in probability to the same constant.
This shows the first convergence. The second one could be obtained in a similar manner.
\end{proof}

\begin{lem}\label{dm}
Consider the Hilbert space $\mathcal{H}:=L^2([0,T]\times \mathcal{M})$ where $\mathcal{M}$ is the counting measure on $\mathbb{N}$, then
\begin{align*}
\sqrt{Var\left(\frac{1}{2}\left\|\sqrt{var(\cj_{p,1})}DM_{N,T}\right\|^2_{\mathcal{H}}\right)}\to 0, \quad \text{as }\; N, T \to \infty,
\end{align*}
where $D$ is the Malliavin derivative defined in Definition C.\ref{malliavin_der} .
\end{lem}

The proof of Lemma \ref{dm} is similar to the proof of Lemma 1 in Ref.~\refcite{CDK}; and it is based on some Malliavin calculations. The main idea is that, after to get the following inequality
\begin{equation*}
\sqrt{Var\left(\frac{1}{2}\left\|\sqrt{var(\cj_{p,1})}DM_{N,T}\right\|^2_{\mathcal{H}}\right)}\leq \frac{1}{2}(B_1+B_2+B_3),
\end{equation*}
where
\begin{align*}
B_1:=&\int_0^T\left(Var\left[\sum_{k=1}^{N}\lambda_k^{2p+2\gamma}u_k^2(r)\right]\right)^{1/2}dr\\
B_2:=&2\sigma \int_0^T\left(Var\left[\sum_{k=1}^{N}\lambda_k^{2p+\gamma}u_k(r)\int_r^Te^{(-\theta_0\lambda_k-\beta_0\lambda_k^{1/2})(t-r)}dw_k(t)\right]\right)^{1/2}dr\\
B_3:=&\sigma^2\int_0^T\left(Var\left[\sum_{k=1}^{N}\lambda_k^{2p}\left(\int_r^Te^{(-\theta_0\lambda_k-\beta_0\lambda_k^{1/2})(t-r)}dw_k(t)\right)^2\right]\right)^{1/2}dr,
\end{align*}
use the properties of the OU process \eqref{eq:OU-Fourier}, such as the fourth moments, independence, etc., to proof that $B_i\to 0$ as $N, T \to \infty$ for $i=1,2,3$. In this way, we conclude the asymptotic normality applying Theorem B.\ref{MS-con}.

The idea of the proof for $\beta$ is very similar.

\section{Simulation study}\label{simu}
In this section, we present a simulation study to illustrate the statistical properties and validate the MLEs. 

First, we note that from \eqref{eq:MLE-theta}, and \eqref{eq:MLE-beta} and after some elemental algebra we obtain
\begin{align*}
\widehat{\theta}_{N,T} &= \frac{1}{I_{2,2} I_{1,2} - I_{3/2,2}^2} \big( I_{3/2,2} J_{1/2,2} - I_{1,2} J_{1,2}\big),\\    
\widehat{\beta}_{N,T} &= \frac{1}{I_{2,2} I_{1,2} - I_{3/2,2}^2} \big( I_{3/2,2} J_{1,2} - I_{2,2} J_{1/2,2}\big), 
\end{align*}

These last expressions provide a version of the estimators that we use to perform the numerical experiments of this work.

\subsection{Numerical validation of the MLEs}
Here, we present a practical and simple case that will serve to study the validation of the MLEs for the two parameters $\beta$ and $\theta$.\\

Consider the SPDE \eqref{stochastic_local_nonlocal} where for simplicity $d=1$ and $\mathcal{O}=[0,1]$. 

The complete orthonormal system of eigenfunctions $h_k$ for $L^2(\mathcal{O})$ is defined as
\begin{align*}
h_k(x)=
\begin{cases}
  1,    & k=1\\
  \sqrt{2}cos((k-1)\pi x),    & k=2,\ldots.
\end{cases}
\end{align*}

For simplicity, we are taking $U_0(x)\equiv 0$, which implies that $u_k(0)=0$ for all $k\ge 0$.\\

At this point, we assume that the parameters  $\beta$ and $\theta$ are known and we use simulations of the solution of the Ornstein-Uhlenbeck differential equation \eqref{eq:OU-Fourier}. In addition, we are using $\gamma=0$. We generate paths of OU processes in the interval time $[0,1]$ using Milstein Scheme with a discretization of $\Delta=0.001$. The parameters used for the simulations were $\theta=0.5,\beta=10.0,\sigma=1.0$, and $x=0.1$.  Thus, using the formulas in Equations \eqref{eq:MLE-theta} and \eqref{eq:MLE-beta} we obtained the MLEs of $\theta$ and $\beta$ based on data given by the paths generated. This gives us the estimated parameters  $\widehat{\beta}_{N,T}$ and $\widehat{\theta}_{N,T}$ and compares them with the true values. \\

Figure \ref{fig:conv_time} illustrates numerical evidence of the consistency for the MLEs provided in Theorem \ref{theo:cons}. Here, we are considering a fixed $N=50$ and the time parameter is varying in the interval $[0,1]$, i.e. $T=i\Delta$ for $i=10,20,\ldots,1,000$. From the numerical evidence, we can say that when $T=0.3$ both estimators have converged to the true value. The value of $N=50$ was chosen as the smallest value that ensures a good convergence of the MLEs, as $T$ increases. That means that the numerical approximation \eqref{u_truncation} is good enough to provide a small error on the approximation as well for the estimators.

\begin{figure}[H]
\centering
\resizebox*{10.5cm}{!}{\includegraphics[width=25cm, height=20cm]{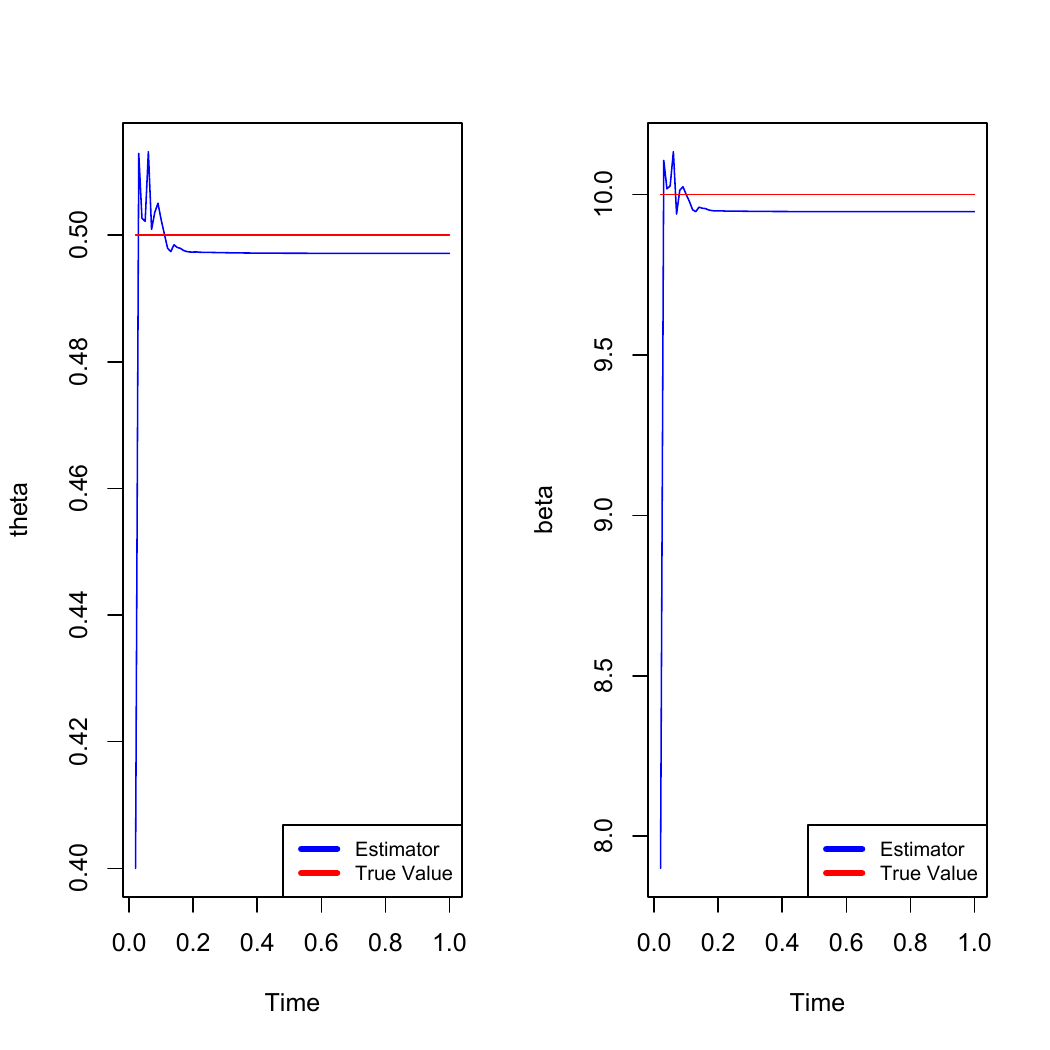}}
\caption{Consistency of MLEs when the parameters are $\beta=10$ and $\theta=0.5$ and $T$ grows.} \label{fig:conv_time}
\end{figure}

On the other side, knowing that convergence is guaranteed when $T=1$, Figure \ref{fig:conv_N} exemplifies the convergence when $N$ increases from $1$ to $100$. We can observe that for $N=50$, we have convergence for both estimators.

\begin{figure}[H]
\centering
\resizebox*{10.5cm}{!}{\includegraphics[width=25cm, height=20cm]{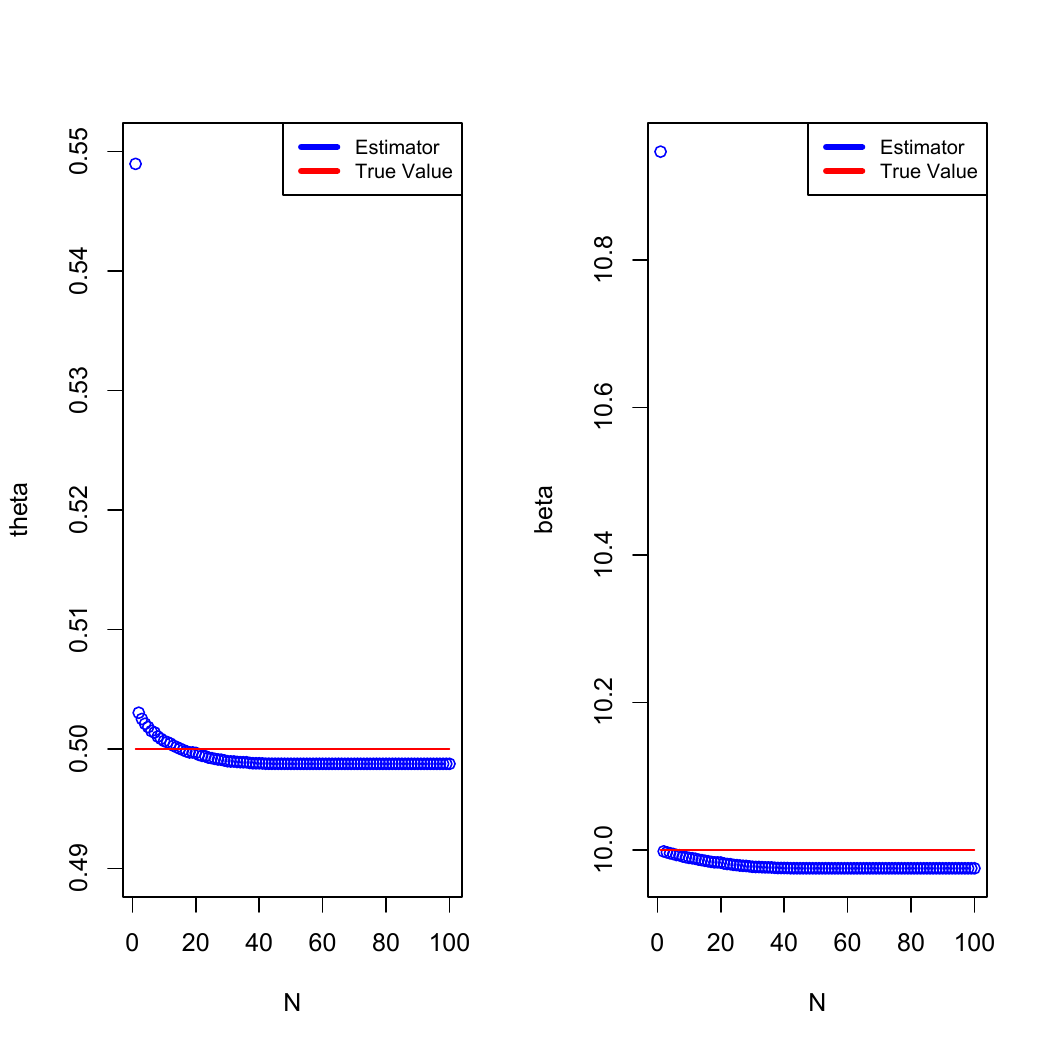}} 
\caption{Convergence of parameters with $\beta=10$ and $\theta=0.5$ when $N$ grows.} \label{fig:conv_N}
\end{figure}
Finally, we simulated $1,000$ data set paths in the time interval $[0,1]$ and $N=50$ to calculate the mean and quantiles ( 95\%). These are reported in Table \ref{tab:exa1}.

\begin{table}[H]
\begin{center}
\begin{minipage}{350pt}
\caption{Average (estimator) and quantiles ($95\%$) of parameter estimates.}
\label{tab:exa1}
\end{minipage}

\hspace*{-2.5cm}
\begin{minipage}{178pt}
\begin{tabular}{c| c| c | c}
\textbf{Parameter} &\textbf{Real value} &\textbf{Estimator} & \textbf{Quantile 95\%}\\
\hline
\hline
$\theta$ & 0.5 & 0.49816   & (0.48556,0.51204) \\
\hline
$\beta$ & 10& 9.93528 & (9.89349,10.07518) \\
\hline
\end{tabular}

\end{minipage}
\end{center}
\end{table}



\subsection{Simulations of solutions of the SPDE}

This section is devoted to presenting simulations of one example of SPDE in one dimension. We generate paths of OU processes in the interval time $[0,1]$ using Milstein Scheme with a discretization of $\Delta=0.001$. The parameters used for the  simulations were $\theta=2.0,\beta=1.0,\sigma=0.1$, $\gamma=0$, $N=100$  and  with a non-zero initial condition given $u_k(0)=\big(u(0,\cdot), h_k(\cdot)\big)_{L^2(\mathcal{O})}$, where  the initial condition was set as  $u(0,x)= (6/7)\,(1+x-x^2)$  for $x\in [0,1]$.\\

Figure \ref{fig:mov_paths} shows a simulation of the stochastic density using the SPDE \eqref{stochastic_local_nonlocal}. The simulation to $u$ has been done using the numerical approximation $u^N$ given by \eqref{u_truncation}, with $N=100$ and a discretization for $x\in [0,1]$ of $0.001$.

\begin{figure}[H]
\centering
\resizebox*{10.5cm}{!}{\includegraphics[width=25cm, height=20cm]{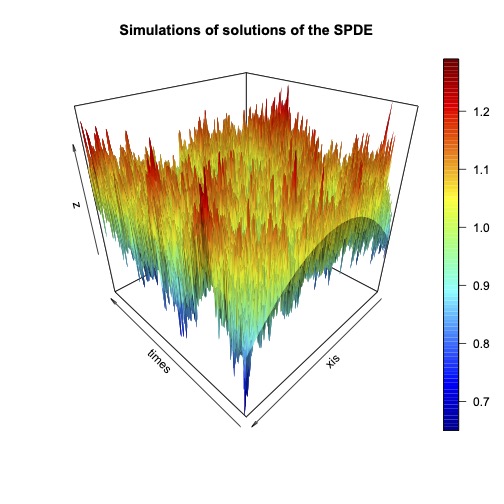}} 
\caption{Stochastic density of animal movements, represented by the SPDE \eqref{stochastic_local_nonlocal} and using a numerical approximation of the solution $u$.} \label{fig:mov_paths}
\end{figure}

\section{Concluding remarks}\label{conclu}

In this work, we have used a stochastic partial differential equation to model ecological niches with some particular properties; here we have considered the sum of classical and fractional Laplacians. This model is biologically relevant in situations of a population following long-jump foraging patterns alternated with focused searching strategies at small scales (see Ref.~\refcite{Dipi-valdi}). We applied the Galerkin projection to the SPDE and we see that every coefficient of the spectral decomposition satisfy an SDE: the well-known OU process. \\

We observe that it is possible to approximate the density $u(t,\xi)$ at every time $t\in [0,T]$ and space point $\xi$ by using its truncation $u^N(t,\xi)$ (see \eqref{u_spectral} and \eqref{u_truncation}), thus, we could use this approximation to simulate the SPDE's solution and to estimate the parameters. Indeed, we have performed MLE for this finite-dimensional Galerkin approximation. We have proved the consistency of the MLE of the parameters, and we sketched the proof of the asymptotic normality using the Malliavin-Stein method. We also have presented numerical experiments to validate the statistical method. The numerical experiments confirm that the MLE converges quickly, when $N,T \rightarrow \infty$, to the true parameter. \\

As a future work, it could be possible to consider an affine-type noise, instead of the additive noise, in such a manner that the SPDE still could be diagonalizable; so the method used in this work still is applicable. We could consider exploring the model using an alternative definition of the fractional Laplacian. However, the current inference method is no longer suitable, and we must adopt a completely different approach.\\

Given that both a jump process and a random walk are present, we can apply this model and method to finance as well. Certainly, this type of model provides a robust framework for modeling financial assets. Incorporating jump processes into SPDEs, as elaborated in Ref.~\refcite{Cont}, allows for a more precise depiction of market fluctuations and extreme events. This presents an invaluable tool for thorough analysis and accurate prediction of asset price dynamics.

\appendix

\section{Some useful results}

For completeness, we present the following version of the classical Strong Law of Large Numbers (see Theorem 2.3.2  in Ref.~\refcite{pavon}). 

\begin{theorem}[Strong Law of Large Numbers]
\label{SLLN}
Let $\xi_k, \ k\geq 1,$ be independent random variables with the following
properties:
\begin{itemize}
\item $\E (\xi_k)=0$,\ $\E( \xi_k^2)>0$,
\item There exist real numbers $c>0$ and $ \alpha\geq -1$ such that
$$
\lim_{k\to \infty} k^{-\alpha}\E (\xi_k^2)=c.
$$
\end{itemize}
Then, with probability one,
$$
\lim_{N\to \infty} \frac{\sum_{k=1}^N\xi_k}{\sum_{k=1}^N \E (\xi_k^2)}=0.
$$
If, in addition, $\E (\xi_k^4)\leq c_1\Big(\E(\xi_k^2)\Big)^2$ for all $k\geq 1$,
with $c_1>0$ independent of $k$, then, also with probability one,
$$
\lim_{N\to \infty} \frac{\sum_{k=1}^N \xi_k^2}{\sum_{k= 1}^N \E (\xi_k^2)}=1.
$$
\end{theorem}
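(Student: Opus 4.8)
The plan is to derive both assertions from two classical tools: Kolmogorov's convergence criterion for series of independent, mean-zero, square-integrable random variables (if $\sum_k \mathrm{Var}(Y_k)<\infty$ then $\sum_k Y_k$ converges almost surely) together with Kronecker's lemma (if $B_N\uparrow\infty$ and $\sum_k x_k/B_k$ converges, then $B_N^{-1}\sum_{k=1}^N x_k\to 0$). Write $b_k:=\E\xi_k^2$ and $B_N:=\sum_{k=1}^N b_k$. The hypothesis $k^{-\alpha}\E\xi_k^2\to c>0$ means $b_k\sim c\,k^\alpha$, and since $\alpha\ge -1$ the partial sums satisfy $B_N\to\infty$: for $\alpha>-1$ one has $B_N\sim \frac{c}{\alpha+1}N^{\alpha+1}$, while in the boundary case $\alpha=-1$ one has $B_N\sim c\log N$. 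The divergence of $B_N$ is what makes the normalization $\sum_k\E\xi_k^2$ the correct one, and it is precisely the condition $\alpha\ge-1$ that guarantees it.

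For the first assertion I would apply Kolmogorov's criterion to $Y_k:=\xi_k/B_k$. Since $\E\xi_k=0$ we have $\mathrm{Var}(Y_k)=b_k/B_k^2$, so the series to control is $\sum_k b_k/B_k^2$. This converges by a telescoping estimate that uses nothing beyond $B_N\to\infty$: because $b_k=B_k-B_{k-1}$ and $B_k\ge B_{k-1}$,
\[
\frac{b_k}{B_k^2}=\frac{B_k-B_{k-1}}{B_k^2}\le \frac{B_k-B_{k-1}}{B_kB_{k-1}}=\frac{1}{B_{k-1}}-\frac{1}{B_k}\qquad(k\ge 2),
\]
and summing telescopes to a bound $B_1^{-1}<\infty$. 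Hence $\sum_k \xi_k/B_k$ converges a.s., and Kronecker's lemma with the increasing sequence $B_N$ yields $B_N^{-1}\sum_{k=1}^N\xi_k\to 0$ a.s.

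For the second assertion, set $\zeta_k:=\xi_k^2-b_k$; these are independent with $\E\zeta_k=0$ and $\mathrm{Var}(\zeta_k)=\E\xi_k^4-(\E\xi_k^2)^2\le \E\xi_k^4\le c_1 b_k^2$ by the added fourth-moment hypothesis. I would again invoke Kolmogorov's criterion, now for $\zeta_k/B_k$, for which the relevant series is dominated by $c_1\sum_k b_k^2/B_k^2$. Here the crude telescoping bound no longer suffices, since for rapidly growing $b_k$ the ratio $b_k/B_k$ need not tend to zero; one must therefore use the polynomial asymptotics: $b_k/B_k\sim(\alpha+1)/k$ when $\alpha>-1$, giving a tail comparable to $\sum_k k^{-2}$, and $b_k/B_k\sim 1/(k\log k)$ when $\alpha=-1$, giving a tail comparable to $\sum_k k^{-2}(\log k)^{-2}$; both converge. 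Kronecker's lemma then gives $B_N^{-1}\sum_{k=1}^N\zeta_k\to 0$ a.s., and since $B_N^{-1}\sum_{k=1}^N b_k=1$ this is exactly $B_N^{-1}\sum_{k=1}^N\xi_k^2\to 1$ a.s.

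The routine parts are the two summability checks; the one genuine subtlety — and the step I expect to require the most care — is the second summability bound in the boundary case $\alpha=-1$, where $B_N$ grows only logarithmically and one must confirm that the convergent comparison $\sum_k k^{-2}(\log k)^{-2}$ (rather than a divergent one) controls the tail. Apart from that the argument is simply a packaging of Kolmogorov's criterion with Kronecker's lemma, and it is worth emphasizing the structural point that the first part needs only $B_N\to\infty$, whereas the second genuinely relies on the prescribed polynomial growth $b_k\sim c\,k^\alpha$.
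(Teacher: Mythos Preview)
Your argument is correct. Both parts are handled by the standard Kolmogorov--Kronecker route, and your summability checks are right: the telescoping bound $b_k/B_k^2\le 1/B_{k-1}-1/B_k$ disposes of the first series using only $B_N\uparrow\infty$, while for the second series the asymptotics $b_k/B_k\sim(\alpha+1)/k$ (for $\alpha>-1$) and $b_k/B_k\sim 1/(k\log k)$ (for $\alpha=-1$) both give square-summable tails. The boundary case $\alpha=-1$ that you flag as delicate is indeed fine, since $\sum_k k^{-2}(\log k)^{-2}<\infty$.

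There is nothing to compare against: the paper does not prove this theorem. It is stated in the appendix ``for completeness'' as a known auxiliary result and is invoked as a black box in the proof of Theorem~\ref{theo:cons}. Your write-up therefore supplies a proof the paper omits, and does so by the classical approach one would expect.
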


\section{Malliavin-Stein method}

In this subsection, we introduce the framework of Malliavin calculus and its connection with  Stein's method. We present some necessary stools and definitions to achieve the asymptotic normality of $\hat{\theta}_{N,T}$ and $\hat{\beta}_{N,T}$ using the Malliavin-Stein's method.

Let $T>0$ be given. We will suppose that $\mathcal{H}$ is a separable Hilbert Space of the form $\mathcal{H}=L^2([0,T]\times \mathcal{M})$ where $\mathcal{M}$ is the counting measure on $\mathbb{N}$, more precisely, $v\in\mathcal{H}$ is of the form
\begin{equation*}
v(t)=\sum_{k=1}^{\infty}v_k(t),
\end{equation*}
and we endow $\mathcal{H}$ with the norm
\begin{equation*}\label{df:norm}
\|v\|_{\mathcal{H}}:=\left(\sum_{k=1}^{\infty}\int_0^T|v_k(t)|^2dt\right)^{1/2}.
\end{equation*}

Let $W=\{W(h): h\in \mathcal{H}\}$ be an {\it isonormal Gaussian process} associated with $\mathcal{H}$ defined on a complete probability space $(\Omega, \mathbb{F}, \mathcal{P})$ where, without loss of generality, we can assume that $\mathbb{F}=\sigma(W)$ is the $\sigma-$algebra generated by $W$. In other words, an isonormal Gaussian process is simply a family of centered Gaussian random variables with the covariance structure given by $\mathbb{E}(W(h)W(g))=\langle h,g\rangle_{\mathcal{H}}$. 

For $q\geq 1$, consider the $q$th Hermite polynomial defined as
\begin{equation*}
H_q(x)=(-1)^qe^{\frac{x^2}{2}}\frac{d^q}{dx^q}(e^{-\frac{x^2}{2}}).
\end{equation*}
We denote by $\mathfrak{H}_q$ to the closed linear subspace of $L^2(\Omega):=L^2(\Omega, \mathbb{F}, \mathcal{P})$ generated by $\{H_q(W(h)):h\in \mathcal{H},\;\|h\|_{\mathcal{H}}=1\}$ which is known as the $q$th Wiener chaos of $W$. The space $L^2(\Omega)$ can be decomposed in the orthogonal sum of the spaces $\mathfrak{H}_q$, therefore, any square integrable random variable $F\in L^2(\Omega)$ admits the following Wiener-It\^o chaotic expansion 
\begin{equation}\label{def:chaotic}
F=\sum_{q=0}^{\infty}I_q(f_q),
\end{equation}
where $I_q$ is the multiple integral of order $q$ defined as 
\begin{equation*}
I_q(f_q)=q!\int_0^TdW(t_1)\int_0^{t_1}dW(t_2)\cdots \int_0^{t_{q-1}}dW(t_q)f_q(t_1,\ldots , t_q),\; f_	q\in L^2([0,T]^q),\;
\end{equation*}
with $0<t_1<t_2<\cdots t_q <T$.

Now, we present some basic elements of the Malliavin calculus with respect to the isonormal Gaussian process W. Denote by $C_p^{\infty}(\mathbb{R}^n)$ the set of all infinitely continuously differentiable functions $f:\mathbb{R}^n\to \mathbb{R}$ such that $f$ and all its partial derivatives have polynomial growth. Let $\mathcal{S}$ be the class of smooth random variables $F$ of the form 
\begin{equation}\label{smth}
F=f(W(h_1),\ldots, W(h_n)),
\end{equation}
where $f\in C_p^{\infty}(\mathbb{R}^n)$ and $h_i \in \mathcal{H}$ for all $1\leq i\leq n $. The derivative of this class of random variables is the following.
\begin{defi}\label{malliavin_der}
The Malliavin derivative of a smooth random variable $F\in\mathcal{S}$ of the form \eqref{smth} is given by
\begin{equation*}
DF=\sum_{i=1}^n\frac{\partial f}{\partial x_i}(W(h_1),\ldots, W(h_n))h_i.
\end{equation*}
\end{defi}
We remark that the operator $D:\mathcal{S}\subset L^p(\Omega)\to L^p(\Omega;\mathcal{H})$ is closable for any $p\geq 1.$ Therefore, we denote by $\mathbb{D}^{1,p}$ the closure of $\mathcal{S}$ we respect to the norm
\begin{equation*}
\|F\|_{1,p}=\left[\mathbb{E}\left(|F|^p\right)+\mathbb{E}\left(\|DF\|^p_{\mathcal{H}}\right)\right]^{1/p}.
\end{equation*}

The Malliavin derivative satisfies the following chain rule.
\begin{prop}
Let $\psi:\mathbb{R}^{n}\to \mathbb{R}$ be a continuously differential function with bounded partial derivatives. If $F=(F_1,\ldots , F_n)$ is a random vector whose components belong to the space $\mathbb{D}^{1,2}$. Then $\phi(F)\in \mathbb{D}^{1,2}$ and 
\begin{equation}\label{chainrule}
D(\psi(F))=\sum_{i=1}^{n}\frac{\partial \psi}{\partial x_i}(F)DF_{i}.
\end{equation}
\end{prop}

In summary, the Malliavin derivative $D$ is a closed and unbounded operator that takes values in $L^2([0,T]\times \Omega)$ and it is defined on the dense subset $\mathbb{D}^{1,p}$ of $L^2(\Omega)$. 

The following result Ref.~\refcite[Proposition 1.2.1]{Nualart} establishes under which conditions the random variable \eqref{def:chaotic} belongs to the domain of the derivation operation and how we compute its Malliavin derivative.

\begin{prop}
Let $F$ be a random variable defined as in \eqref{def:chaotic}. Then $F\in \mathbb{D}^{1,2}$ if and only if 
\begin{equation*}
\sum_{q=1}^{\infty}qq!\|f_q\|^2_{L^2([0,T]^q)}<\infty
\end{equation*}
and in this case, we have
\begin{equation*}
D_tF=\sum_{q=1}^{\infty}qI_{q-1}(f_q(\cdot, t)).
\end{equation*}
\end{prop}

The operator $L$, known as the infinitesimal generator of the Ornstein-Uhlenbeck semigroup, is defined as 
\begin{equation*}
L=\sum_{q=0}^{\infty}-qL_q, 
\end{equation*}
where $J_q$ denotes the projection operator onto $\mathfrak{H}_q$ and its domain is formed by the random variables $F\in L^2(\Omega)$ such that $\sum_{q=1}^{\infty}q^2\|J_qF\|^2_{L^2(\Omega)}<\infty$. The pseudo-inverse operator $L^{-1}$ is defined as $L^{-1}F=\sum_{q=1}^{\infty}-\frac{1}{q}J_q(F)$, for any $F\in L^2(\Omega)$, in addition, $L^{-1}F\in \text{Dom}L$ and $LL^{-1}F=F-\mathbb{E}(F)$.

We also introduce the following infinite dimensional Malliavin integration by parts formula Ref.~\refcite[Theorem 2.9.1]{NorPec}.

\begin{lem}
Let $F\in \mathbb{D}^{1,2}$ and $G\in L^2(\Omega)$. Then $L^{-1}G \in \text{Dom}L$ and 
\begin{equation}\label{ibp}
\mathbb{E}(FG)=\mathbb{E}(F)\mathbb{E}(G)+\mathbb{E}\left(\langle DF,-DL^{-1}G\rangle_{\mathcal{H}}\right).
\end{equation}
\end{lem}

After this short introduction to some basic concepts of Malliavin calculus, we present its connection with Stein's method which allows us to obtain bounds between two probability distributions with respect to a metric. First, we present the following lemma.

\begin{lem}\label{SL}
Let $Y$ be an integrable random variable. Then, $Y$ is a standard normal random variable if and only if $\mathbb{E}\left(f^{\prime}(Y)\right)=\mathbb{E}\left(Yf(Y)\right)$ for any function $f:\mathbb{R}\to \mathbb{R}$ continuously differentiable such that $f^{\prime}(Y)$, $Yf(Y)\in L^{1}(\Omega)$ and $f$, $f^{\prime}$ have at most polynomial growth at infinity.
\end{lem}

Roughly speaking, the Stein's Lemma \ref{SL} hints that a real random variable $Y$ is close to the standard normal distribution whenever $\mathbb{E}\left(f^{\prime}(Y)\right)-\mathbb{E}\left(Yf(Y)\right)$ is closed to zero for any $f$ in a deterministic class of functions. 

Now, consider a centered random variable $F\in \mathbb{D}^{1,2}$ with $\mathbb{E}(F^2)=1$ and $f\in C^1$ such that $\|f\|<c$ and $\|f^{\prime}\|<2$. Using the integration by parts formula \eqref{ibp} and the chain rule \eqref{chainrule} we have that
\begin{align*}
\left|\mathbb{E}\left(f^{\prime}(F)-Ff(F)\right)\right|&=\left|\mathbb{E}\left(f^{\prime}(F)\left[1-\langle DF,-DL^{-1}F\rangle_{\mathcal{H}}\right]\right)\right|\\
&\leq 2 \mathbb{E}\left(\left| 1-\langle DF,-DL^{-1}F\rangle_{\mathcal{H}}\right|\right)
\end{align*}
and if we assume that $F\in\mathbb{D}^{1,4}$, then $1-\langle DF,-DL^{-1}F\rangle_{\mathcal{H}}$ is square integrable, therefore by Cauchy-Schwarz inequality we obtain 
\begin{align*}
\left|\mathbb{E}\left(f^{\prime}(F)-Ff(F)\right)\right|\leq 2 \sqrt{\text{Var}\left(\langle DF,-DL^{-1}F\rangle_{\mathcal{H}}\right)}
\end{align*}
The above arguments allow us to introduce the following results Ref.~\refcite{nordin-peccati}. 

\begin{theo}
Let $q\geq 2$. If $F=I_q(f)$ belongs of Wiener chaos of order $q$ such that $\mathbb{E}(F^2)=\sigma^2>0$ and $N\sim \mathcal{N}(0,\sigma^2)$. Then, the total variation of the random variables $F$ and $N$ satisfies that
\begin{equation*}
\text{d}_{\text{TV}}(F,N)\leq \frac{2}{\sigma^2}\sqrt{\text{Var}\left(\langle DF, -DL^{-1}F\rangle_{\mathcal{H}}\right)}=\frac{2}{\sigma^2}\sqrt{\text{Var}\left(\frac{1}{q}\|DF\|^2_{\mathcal{H}}\right)}
\end{equation*}
\end{theo}

\begin{theo}\label{MS-con}
Let $F_n=I_q(f_n)$ with $n\geq 1$ a sequence of random variables with $q\geq 2$. Assume that $\mathbb{E}(F^2_n)\to \sigma^2$. Then the following assertions are equivalents 
\begin{enumerate}
  \item $F_n\rightarrow N$ in law,
  \item $\text{d}_{\text{TV}}(F_n,N)\to 0$.
\end{enumerate}
\end{theo}

\section*{Acknowledgment}
Baltazar-Larios F. has been supported by UNAM-DGAPA-PAPIIT-IN102224. The work of L. Peralta is supported by UNAM-DGAPA PAPIIT grants IA100324 (Mexico).


\end{document}